  \definecolor{Mygrey}{gray}{0.75}
\providecommand\@dotsep{5}
\def\listtodoname{List of Todos}
\def\listoftodos{\@starttoc{tdo}\listtodoname}
\numberwithin{equation}{section}
\def\R {{\rm I}\hskip -0.85mm{\rm R}}
\def\N {{\rm I}\hskip -0.85mm{\rm N}}
\newtheorem{theorem}{Theorem}[section]
\newtheorem{proposition}[theorem]{Proposition}
\newtheorem{lemma}[theorem]{Lemma}
\newtheorem{remark}{Remark}
\title[Multiplicity for an equation with degenerate nonlocal diffusion]
{Multiplicity of positive solutions for an equation with degenerate nonlocal diffusion}
\author[L. Gas\'inski]{Leszek Gasi\'nski}
\author[J. R. Santos Jr.]{Jo\~ao R. Santos J\'unior}
\address[L. Gasi\'nski]{\newline\indent Faculty of Mathematics and Computer Science
\newline\indent
 Jagiellonian University
\newline\indent
 ul. Lojasiewicza 6
\newline\indent
 30-348 Krak\'ow, Poland}
\email{\href{mailto: Leszek.Gasinski@ii.uj.edu.pl}{Leszek.Gasinski@ii.uj.edu.pl}}
\address[J. R. Santos Jr.]{\newline\indent Faculdade de Matem\'atica
\newline\indent
Instituto de Ci\^{e}ncias Exatas e Naturais
\newline\indent
Universidade Federal do Par\'a
\newline\indent
Avenida Augusto corr\^{e}a 01, 66075-110, Bel\'em, PA, Brazil}
\email{\href{mailto: joaojunior@ufpa.br}{joaojunior@ufpa.br}}
\thanks{Leszek Gasi\'nski was partially supported by the National Science Center of Poland under Project No. 2015/19/B/ST1/01169.
        J. R. Santos J\'unior was partially supported by CNPq 302698/2015-9 and CAPES 88881.120045/2016-01, Brazil.}
\subjclass[2010]{ 35J20, 35J25, 35Q74.}
\keywords{ Nonlocal problems, degenerate coefficient, fixed points.}
\begin{document}

\maketitle
\begin{abstract}
Even without a variational background, a multiplicity result of positive solutions with ordered $L^{p}(\Omega)$-norms is provided to the following boundary value
problem
\begin{equation*}
\left \{ \begin{array}{ll}
-a(\int_{\Omega}u^{p}dx)\Delta u = f(u) & \mbox{in $\Omega$,}\\
u=0 & \mbox{on $\partial\Omega$,}
\end{array}\right.
\end{equation*}
where $\Omega$ is a bounded domain and $a$, $f$ are continuous real functions with $a$ vanishing in many positive points.
\end{abstract}
\maketitle

\section{Introduction}


We are going to investigate the existence of multiple positive solutions for the following class of degenerate nonlocal problems
\begin{equation}\label{P}\tag{P}
\left \{ \begin{array}{ll}
-a\left( \int_{\Omega}u^{p}dx\right)\Delta u = f(u) & \mbox{in $\Omega$,}\\
u>0 & \mbox{in $\Omega$,}\\
u=0 & \mbox{on $\partial\Omega$,}
\end{array}\right.
\end{equation}
where $\Omega$ is a bounded smooth domain in $\R^{N}$, $p\geq 1$, $a\in C([0,\infty))$ and $f\in C^{1}(\R)$ are functions which, in a first moment, verifies only:

\medskip

\begin{enumerate}
\item[(H0)]\label{a} there exist positive numbers $0=:t_{0}<t_{1}<t_{2}<\ldots<t_{K}$ ($K\ge 1$) and $t_{\ast}>0$ such that:

\smallskip

$a(t_{k})=0$, $a>0$ in $(t_{k-1}, t_{k})$, for all  $k\in\{1, \ldots, K\}$, $f(t)>0$ in $(0, t_{\ast})$ and $f(t_{\ast})=0$.
\end{enumerate}

\medskip

By considering the same sort of hypothesis (H0), authors in \cite{SS} (motivated by papers like \cite{AA}, \cite{BB} and \cite{Hess}) have proven that problem
\begin{equation}\label{Q}\tag{KP}
\left \{ \begin{array}{ll}
-a\left( \int_{\Omega}|\nabla u|^{2}dx\right)\Delta u = f(u) & \mbox{in $\Omega$,}\\
u>0 & \mbox{in $\Omega$,}\\
u=0 & \mbox{on $\partial\Omega$,}
\end{array}\right.
\end{equation}
has at least $K$ positive solutions whose $H_{0}^{1}(\Omega)$-norms are ordered, provided that an appropriated \emph{area condition} relating $a$ and $f$
holds. The approach used in \cite{SS} is strongly based on the variation structure of problem \eqref{Q}.

A natural and interesting question related to \eqref{P} to be addressed in this paper is: once broken variational structure of problem \eqref{Q} (by a change in the
type of nonlocal term), would still persist the existence of multiple ``ordered solutions''?

By using an approach completely different from \cite{SS}, present paper provides a positive answer to the last question under suitable assumptions on $a$ and
$f$. In order to state in a precise way our main result we need to introduce some assumptions:

\medskip

\begin{enumerate}
\item[(H1)]\label{f2} map $(0, t_{\ast})\ni t\mapsto f(t)/t$ is decreasing;
\end{enumerate}

\medskip

\begin{remark}\label{rem}
It follows from (H1) that $\gamma:=\lim_{t\to 0^{+}}f(t)/t$ is well defined and it can be a positive number or $+\infty$, depending on value assumed by $f$ at $0$. In fact, it is clear that if $f(0)=0$, then, since $f\in C^{1}(\R)$ and $\gamma=f'(0)$, we will have $\gamma<+\infty$. On the other hand, if $f(0)>0$ then $\gamma=+\infty$.
\end{remark}

\medskip

Before setting further assumptions let us introduce some notation. Along the paper, $\|\cdot\|$, $|\cdot|_{r}$, $\lambda_{1}$, $\varphi_{1}$ and $e_{1}$ denote
$H_{0}^{1}(\Omega)$-norm, $L^{r}(\Omega)$-norm, first eigenvalue of minus Laplacian with homogeneous Dirichlet boundary condition, positive eigenfunction
associated to $\lambda_{1}$ normalized in $H_{0}^{1}(\Omega)$-norm and positive eigenfunction associated to $\lambda_{1}$ normalized in $L^{\infty}(\Omega)$-norm,
respectively.

Our last assumptions relate functions $a$ and $f$.

\medskip

\begin{enumerate}
 \item[(H2)] $t_{K}<t_{\ast}^{p}\int_{\Omega}e_{1}^{p}dx$;
 \item[(H3)] $\max_{t\in [0, t_{K}]}a(t)<\gamma/\lambda_{1}$;
 \item[(H4)] $\max_{t\in[0, t_{\ast}]}f(t)t_{\ast}^{p-1}<(\lambda_{1}^{1/2}/C_{1}|\Omega|^{1/2})\max_{t\in[t_{k-1}, t_{k}]}a(t)t$,
             for all $k\in\{1, \ldots, K\}$, where $C_{1}$ stands for best constant of the Sobolev embedding from $H_{0}^{1}(\Omega)$ into $L^{1}(\Omega)$.
\end{enumerate}

\medskip

Condition (H3) is trivially verified if $f(0)>0$ (see Remark \ref{rem}). If $\gamma<\infty$, (H3) basically tells us that the peaks of each bump of $a$ are, in some sense,
controlled from above by variation of $f$ at 0. In turn, (H4) means that the peaks of each bump of $a(t)t$ are controlled from below by maximum value of $f$ in
$[0, t_{\ast}]$ (see Fig. 1, where $\theta=(C_{1}/\lambda_{1}^{1/2})t_{\ast}^{p-1}|\Omega|^{1/2}\max_{t\in[0, t_{\ast}]}f(t)$).

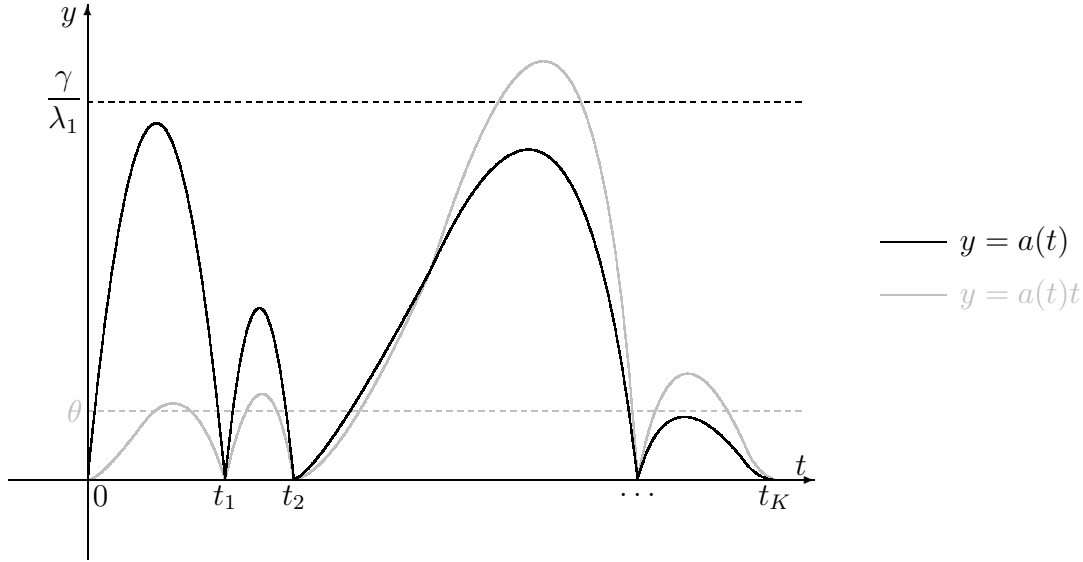
\begin{figure}[h]
\begin{center}
\begin{picture}(430,210)(0,0)
{ \color{Mygrey}
 \bezier{500}(30,30)(35,30)(50,50)
 \bezier{1000}(50,50)(68,75)(82,30)
 \bezier{1500}(82,30)(97,95)(108,30)
 \bezier{1500}(108,30)(126,33)(160,110)
 \bezier{2000}(160,110)(220,300)(238,30)
 \bezier{1000}(238,30)(252,105)(280,40)
 \bezier{00}(280,40)(284,32)(290,30)
 \multiput(30,56)(4,0){68}{\line(2,0){2}}
 \put(28,56){\makebox(0,0)[rc]{{\large $\theta$}}}
 \put(330,100){\line(1,0){25}}\put(360,100){\makebox(0,0)[rl]{{\large $y=a(t)t$}}}
}

 \put(0,30){\vector(1,0){305}} 
 \put(30,0){\vector(0,1){210}}  

 \bezier{2000}(30,30)(56,300)(82,30)
 \bezier{1500}(82,30)(95,160)(108,30)
 \bezier{1500}(108,30)(120,35)(160,110)
 \bezier{2000}(160,110)(215,230)(238,30)
 \bezier{1000}(238,30)(250,75)(280,35)
 \bezier{00}(280,35)(285,30)(290,30)
 \multiput(30,173)(4,0){68}{\line(2,0){2}}
 \put(28,173){\makebox(0,0)[rc]{{\large $\displaystyle\frac{\gamma}{\lambda_1}$}}}
 \put(26,205){\makebox(0,0)[rc]{{\large $y$}}}
 \put(330,120){\line(1,0){25}}\put(360,120){\makebox(0,0)[rl]{{\large $y=a(t)$}}}

 \put(32,28){\makebox(0,0)[lt]{{\large $0$}}}
 \put(82,28){\makebox(0,0)[ct]{{\large $t_1$}}}
 \put(108,28){\makebox(0,0)[ct]{{\large $t_2$}}}
 \put(238,25){\makebox(0,0)[ct]{{\large $\ldots$}}}
 \put(290,28){\makebox(0,0)[ct]{{\large $t_K$}}}
 \put(300,32){\makebox(0,0)[cb]{{\large $t$}}}

\end{picture}
\end{center}
\caption{Geometry of $a(t)$ and $a(t)t$ satisfying $(H1)$, $(H3)$ and $(H4)$.}
\end{figure}

\medskip

The main result of this paper is stated as follows:

\begin{theorem}\label{main}
Suppose \emph{(H0)-(H4)} hold. Then problem \eqref{P} has at least $2K$classical positive solutions with ordered $L^{p}$-norms, namely
$$
0<\int_{\Omega}u_{1, 1}^{p}dx<\int_{\Omega}u_{1, 2}^{p}dx<t_{1}<\ldots<t_{K-1}<\int_{\Omega}u_{K, 1}^{p}dx<\int_{\Omega}u_{K, 2}^{p}dx<t_{K}.
$$
\end{theorem}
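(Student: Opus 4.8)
The plan is to decouple the nonlocal coefficient by \emph{freezing} it. For a parameter $\mu>0$ I would consider the local problem
\begin{equation*}
-\Delta u=\tfrac1\mu f(u)\ \text{ in }\Omega,\qquad u>0\ \text{ in }\Omega,\qquad u=0\ \text{ on }\partial\Omega,
\end{equation*}
and observe that if $u_\mu$ solves it and the consistency relation $\int_\Omega u_\mu^{p}\,dx=t$ holds with $\mu=a(t)$, then $u_\mu$ solves \eqref{P}. Thus the theorem reduces to producing, on each ``bump'' $[t_{k-1},t_k]$ of $a$, two values of $t$ at which the map $g(t):=\int_\Omega u_{a(t)}^{p}\,dx$ meets the diagonal, i.e. $g(t)=t$; the $2K$ fixed points arise as two per bump.

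First I would settle the frozen problem. The constant $t_{\ast}$ is a supersolution, since $-\Delta t_{\ast}=0=\tfrac1\mu f(t_{\ast})$, while $\varepsilon\varphi_1$ is a subsolution for small $\varepsilon$ precisely when $\mu<\gamma/\lambda_1$, which by (H3) holds for every $\mu=a(t)$ with $t\in[0,t_K]$. Hence for such $\mu$ there is a positive solution with $0<u_\mu\le t_{\ast}$; the monotonicity hypothesis (H1), i.e. the sublinearity of $f$, then yields via a Brezis--Oswald type argument that $u_\mu$ is the \emph{unique} positive solution and that $\mu\mapsto u_\mu$ depends continuously (indeed monotonically) on $\mu$. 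Consequently $g$ is well defined and continuous on the interior of each bump, its fixed points are exactly the solutions of \eqref{P}, and elliptic bootstrapping together with $f\in C^1$ upgrades each such $u$ to a classical solution.

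It then remains to show that the continuous function $g-\mathrm{id}$ changes sign at least twice on each $[t_{k-1},t_k]$. Near the endpoints $a(t)\to0$, so $\mu\to0^{+}$; for any $\kappa<t_{\ast}$ the function $\kappa e_1$ becomes a subsolution once $\mu\le f(\kappa)/(\lambda_1\kappa)$ (using that $s\mapsto f(s)/s$ is decreasing), whence $u_{a(t)}\ge\kappa e_1$ and $g(t)\ge\kappa^{p}\int_\Omega e_1^{p}\,dx$ for $t$ close to an endpoint. Letting $\kappa\uparrow t_{\ast}$ and invoking (H2) gives $\liminf_{t\to t_{k-1}^{+}}g(t)\ge t_{\ast}^{p}\int_\Omega e_1^{p}\,dx>t_K\ge t$, and likewise at $t_k$, so $g>\mathrm{id}$ near both endpoints. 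At the maximizer $\bar t\in(t_{k-1},t_k)$ of $a(t)t$ I would instead bound $g$ from above: testing the frozen equation with $u_\mu$ and using Poincar\'e yields $\|u_\mu\|\le |\Omega|^{1/2}\lambda_1^{-1/2}\mu^{-1}\max_{[0,t_{\ast}]}f$, and since $u_\mu\le t_{\ast}$,
\begin{equation*}
g(\bar t)\le t_{\ast}^{p-1}|u_{a(\bar t)}|_1\le t_{\ast}^{p-1}C_1\|u_{a(\bar t)}\|\le \frac{t_{\ast}^{p-1}C_1|\Omega|^{1/2}\max_{[0,t_{\ast}]}f}{\lambda_1^{1/2}\,a(\bar t)}.
\end{equation*}
Since $a(\bar t)\bar t=\max_{[t_{k-1},t_k]}a(t)t$, this last quantity is exactly $<\bar t$ under (H4), so $g(\bar t)<\bar t$. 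The intermediate value theorem then produces a root of $g(t)=t$ in $(t_{k-1},\bar t)$ and another in $(\bar t,t_k)$; this gives $2K$ solutions whose $L^{p}$-norms lie in the disjoint intervals $(t_{k-1},t_k)$ and are therefore ordered as stated.

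The main obstacle is the analysis of the frozen problem that underpins everything: proving uniqueness and the continuous (monotone) dependence $\mu\mapsto u_\mu$ as $\mu=a(t)$ degenerates to $0$, so that $g$ is a genuine single-valued continuous function on each bump. The sublinearity (H1) is exactly what makes the uniqueness and comparison arguments run, while the three quantitative thresholds (H2)--(H4) are calibrated so that the sandwich $g>\mathrm{id}$ at the endpoints and $g<\mathrm{id}$ at the peak of $a(t)t$ holds simultaneously on every bump.
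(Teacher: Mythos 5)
Your strategy coincides with the paper's: freeze the nonlocal coefficient, solve the resulting sublinear problem $-a(\alpha)\Delta u=f(u)$ (the paper's auxiliary problem $(P_k)$), and hunt for two fixed points per bump of the map $\alpha\mapsto\int_\Omega u_\alpha^p\,dx$ (the paper's $\mathcal{P}_k$, your $g$) via the intermediate value theorem. Your quantitative claims all check out and match the paper's: your endpoint lower bound with subsolutions $\kappa e_1$, valid for $a(t)\le f(\kappa)/(\lambda_1\kappa)$, is exactly the paper's Lemma 2.4 with $\kappa=\psi^{-1}(\lambda_1 a(\alpha))$, combined with (H2) as in its Claim 1; and your upper bound at the maximizer $\bar t$ of $a(t)t$ reproduces the paper's Claim 2 bound $\mathcal{P}_k(\alpha)\le a(\alpha)^{-1}\bigl(\max_{[0,t_*]}f\bigr)C_1\lambda_1^{-1/2}t_*^{p-1}|\Omega|^{1/2}$, in fact more directly (you test with $u_\mu$ itself and use Poincar\'e, whereas the paper routes the estimate through an auxiliary function $w_\alpha$ solving $-\Delta w_\alpha=u_\alpha^{p-1}$). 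The only structural differences are cosmetic: existence by sub/supersolutions rather than by minimization of the energy functional.

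The one genuine gap is the continuity of $g$, without which your intermediate value argument has nothing to act on. You assert it follows ``via a Brezis--Oswald type argument \dots (indeed monotonically)'' and then, at the end, concede it is ``the main obstacle''; but neither mechanism you invoke delivers it. Brezis--Oswald gives uniqueness, not stability, and monotone dependence of $u_\mu$ on $\mu$ is perfectly compatible with jump discontinuities of $g$. This is precisely where the paper does its real work (Lemma 2.2 and Proposition 2.3): for $\alpha_n\to\alpha_*$ one shows $\{u_{\alpha_n}\}$ is bounded in $H_0^1(\Omega)$, extracts a weak and then strong limit $u_*$ solving the limit problem, rules out $u_*=0$ by a \emph{uniform} negative upper bound on the energies (Lemma 2.2), identifies $u_*=u_{\alpha_*}$ by uniqueness, and finally upgrades to $C^1(\overline{\Omega})$ convergence via $C^{1,\beta}$ elliptic estimates so that the $L^p$-norms converge. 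Your own subsolution bound $u_\mu\ge\kappa e_1$ could serve as an alternative nontriviality certificate in place of the energy lemma (and, combined with two-sided comparison along monotone sequences, would also make your monotonicity remark rigorous), so the gap is fillable with tools already on your page --- but as written the continuity is assumed, not proven. A second, smaller imprecision: since (H0)--(H1) say nothing about $f$ above $t_*$, ``the unique positive solution'' of the frozen problem is only correct after truncating $f$ (the paper's $f_*$) or restricting to solutions with $0<u\le t_*$; without this, $f$ could for instance grow superlinearly beyond $t_*$ and uniqueness could fail, which would make $g$ multivalued.
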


Throughout the paper, the following auxiliary problem will play an important role: for each $k\in\{1, \ldots, K\}$ and any $\alpha\in (t_{k-1}, t_{k})$ fixed, consider

\begin{equation}\label{AP}\tag{$P_{k}$}
\left \{ \begin{array}{ll}
-a\left( \alpha\right)\Delta u = f_{\ast}(u) & \mbox{in $\Omega$,}\\
u>0 & \mbox{in $\Omega$,}\\
u=0 & \mbox{on $\partial\Omega$,}
\end{array}\right.
\end{equation}
where
\begin{equation*}
f_{\ast}(t)=\left \{ \begin{array}{ll}
f(0) & \mbox{if $t\leq 0$,}\\
f(t) & \mbox{if $0<t<t_{\ast}$,}\\
0 & \mbox{if $t_{\ast}\leq t$.}\\
\end{array}\right.
\end{equation*}

The paper is organized as follows:\\
In section \ref{se:prelim} we present the proof of the multiplicity result for problem \eqref{P} which is divided into four steps (corresponding to four
subsections): 1) Existence of a unique solution to \eqref{AP} satisfying $0<u_{\alpha}\leq t_{\ast}$; 2) Continuity of the map $(t_{k-1}, t_{k})\ni\alpha\mapsto \mathcal{P}_{k}(\alpha)=\int_{\Omega}u_{\alpha}^{p}dx$; 3) Existence of fixed points for the map $\mathcal{P}_{k}$; 4) Conclusion of the proof of Theorem \ref{main}. In section \ref{se:exam} we provide a way to construct example of functions satisfying our hypotheses.

\medskip

\section{Multiplicity of solutions}\label{se:prelim}

\subsection{Step 1: Existence and uniqueness of a solution to \eqref{AP}}

\medskip

Since we have ``removed'' the nonlocal term of \eqref{P}, we can treat problem \eqref{AP} using a variational approach.

\begin{proposition}\label{pro1}
Suppose \emph{(H0)}, \emph{(H1)}, and \emph{(H3)} hold. Then for each $k\in\{1, \ldots, K\}$ and $\alpha\in (t_{k-1}, t_{k})$ fixed, problem \eqref{AP} has a
unique classical solution $0<u_{\alpha}\leq t_{\ast}$.
\end{proposition}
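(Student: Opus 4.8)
The plan is to exploit the fact that, once $\alpha$ is frozen, problem \eqref{AP} is a genuinely \emph{local} semilinear Dirichlet problem $-a(\alpha)\Delta u = f_\ast(u)$ with $a(\alpha)>0$ (since $\alpha\in(t_{k-1},t_k)$ and (H0) guarantees $a>0$ there), so the truncated nonlinearity $f_\ast$ makes the associated energy functional
\begin{equation*}
J(u)=\frac{a(\alpha)}{2}\int_\Omega|\nabla u|^2\,dx-\int_\Omega F_\ast(u)\,dx,\qquad F_\ast(s)=\int_0^s f_\ast(\tau)\,d\tau,
\end{equation*}
well defined and of class $C^1$ on $H_0^1(\Omega)$. \emph{First} I would establish existence of a nontrivial critical point. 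Because $f_\ast$ is bounded (it is continuous, equals $f(0)$ for $t\le 0$, and vanishes for $t\ge t_\ast$), $F_\ast(s)$ grows at most linearly, so $J$ is coercive and weakly lower semicontinuous on $H_0^1(\Omega)$; hence a global minimizer $u_\alpha$ exists. To see the minimizer is nontrivial and positive, I would test the energy along $t\varphi_1$ for small $t>0$: assumption (H3), via $\gamma=\lim_{t\to0^+}f(t)/t$ and the variational characterization $\lambda_1=\inf\|\nabla u\|_2^2/\|u\|_2^2$, forces $J(t\varphi_1)<0$ for small $t$, so the minimum is negative and $u_\alpha\neq 0$.

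\emph{Second}, I would pin down the sign and the a priori bound $0<u_\alpha\le t_\ast$. Replacing $u$ by $|u|$ does not increase $J$ (standard for this truncated problem, using $f_\ast(t)=f(0)\ge 0$ for $t\le 0$), so the minimizer may be taken nonnegative; the strong maximum principle applied to $-a(\alpha)\Delta u_\alpha=f_\ast(u_\alpha)\ge 0$ then gives $u_\alpha>0$ in $\Omega$. For the upper bound I would argue that $u_\alpha\le t_\ast$: on the set $\{u_\alpha>t_\ast\}$ one has $f_\ast(u_\alpha)=0$, so $u_\alpha$ is harmonic there up to the constant $a(\alpha)$ and a comparison (or truncation) argument against the constant $t_\ast$ shows $(u_\alpha-t_\ast)^+\equiv 0$; concretely, testing the equation with $(u_\alpha-t_\ast)^+$ yields $a(\alpha)\int|\nabla(u_\alpha-t_\ast)^+|^2=\int f_\ast(u_\alpha)(u_\alpha-t_\ast)^+=0$. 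Elliptic regularity ($f_\ast$ continuous, hence $u_\alpha\in C^{1,\gamma}$ after bootstrapping) upgrades the weak solution to a classical one.

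\emph{Third}, and this is where hypothesis (H1) is indispensable, I would prove uniqueness. On the range $(0,t_\ast)$ the map $t\mapsto f(t)/t$ is decreasing, i.e. $f_\ast$ is \emph{sublinear} in the sense required by the Brezis--Oswald / Benguria--Brezis--Lieb uniqueness theorem for $-\Delta u = g(x,u)$ with $u\mapsto g(x,u)/u$ strictly decreasing. The cleanest route is the standard Picone-type argument: if $u_1,u_2$ are two positive solutions, I would test the first equation with $(u_1^2-u_2^2)/u_1$ and the second with $(u_2^2-u_1^2)/u_2$, add, and obtain
\begin{equation*}
a(\alpha)\int_\Omega\bigl(|\nabla u_1|^2+|\nabla u_2|^2-\nabla u_1\!\cdot\!\nabla(u_2^2/u_1)-\nabla u_2\!\cdot\!\nabla(u_1^2/u_2)\bigr)dx
=\int_\Omega\Bigl(\frac{f_\ast(u_1)}{u_1}-\frac{f_\ast(u_2)}{u_2}\Bigr)(u_1^2-u_2^2)\,dx.
\end{equation*}
The left side is nonnegative by the Picone inequality, while the strict monotonicity from (H1) makes the right side $\le 0$, with equality only if $u_1\equiv u_2$; this forces $u_1=u_2$.

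The main obstacle I anticipate is the uniqueness step rather than existence: one must handle the behavior of $f_\ast(t)/t$ near $t=0$ carefully when $\gamma=+\infty$ (the case $f(0)>0$), and justify that the test functions $(u_i^2-u_j^2)/u_i$ are admissible in $H_0^1(\Omega)$, which relies on the Hopf lemma giving $u_i\gtrsim \dist(\cdot,\partial\Omega)$ so that the quotients stay bounded up to the boundary. A secondary technical point is confirming that the truncation $f_\ast$ does not introduce a spurious solution with $u_\alpha\ge t_\ast$ somewhere, but the test-function argument above rules this out and guarantees the constructed solution actually solves the original (untruncated) problem \eqref{AP} on the physically relevant range.
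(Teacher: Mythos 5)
Your proposal is correct and follows essentially the same route as the paper: a coercive, weakly lower semicontinuous energy functional yields a global minimizer, (H3) together with testing along $t\varphi_1$ gives nontriviality, truncation/test-function arguments give $0<u_\alpha\leq t_\ast$, uniqueness comes from (H1) via the Brezis--Oswald argument, and elliptic regularity plus the maximum principle finish the proof. The only difference is that you write out explicitly (the $(u-t_\ast)^+$ test and the Picone-type uniqueness computation) what the paper delegates to citations of \cite{SS}, \cite{BO}, \cite{Ag} and \cite{GT}.
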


\begin{proof}
Since $f_{\ast}$ is bounded and continuous, it is standard to prove that the energy functional
$$
I_{k}(u)=a(\alpha)\frac{1}{2}\|u\|^{2}-\int_{\Omega}F_{\ast}(u)dx
$$
of \eqref{AP} is coercive and lower weakly semicontinuous (where $F_{\ast}(s)=\int_{0}^{s}f_{\ast}(\sigma) d\sigma$). Therefore $I_{k}$ has a minimum point
which is a weak solution of \eqref{AP}. Moreover, it follows from (H1) and (H3) that
$$
I_{k}(t\varphi_{1})/t^{2}=\frac{1}{2}a(\alpha)-\int_{\Omega}\frac{F_{\ast}(t\varphi_{1})}{(t\varphi_{1})^{2}}\varphi_{1}^{2}dx\to
\frac{1}{2}\left(a(\alpha)-\frac{\gamma}{\lambda_{1}}\right)<0 \ \mbox{as $t\to 0^{+}$}.
$$
The last inequality implies that any minimum point $u$ of $I_{k}$ is nontrivial because, for $t>0$ small enough
$$
I_{k}(u)\leq I_{k}(t\varphi_{1})=(I_{k}(t\varphi_{1})/t^{2})t^{2}<0.
$$
A simple argument, like in Proposition 3.1 in \cite{SS}, shows that any nontrivial weak solution $u$ of \eqref{AP} satisfies $0\leq u\leq t_{\ast}$. It follows
that \eqref{AP} has a nontrivial weak solution which is unique by (H1) (see \cite{BO}). Since $f_{\ast}(u)=f(u)$ is bounded and $f\in C^{1}(\R)$, it follows
from \cite{Ag} that $u$ is a classical solution. Finally, maximum principle completes the proof (see Theorem 3.1 in \cite{GT}).
\end{proof}

\medskip

\subsection{Step 2: Continuity of the map $\mathcal{P}_{k}$}

\medskip

Next technical lemma will be important to guarantee the continuity of $\mathcal{P}_{k}$. Since, by (H1), the map $(0, t_{\ast})\ni t\mapsto
\psi(t)=f_{\ast}(t)/t$ is decreasing, there exists the inverse, which we will denote by $\psi^{-1}$, and it is defined on $(0, \gamma)$. Thereby, by (H3), for
each $\varepsilon\in (0, \gamma-\lambda_{1}a(\alpha))$, it makes sense to consider function
$$
y_{\alpha}:=\psi^{-1}(\lambda_{1}a(\alpha)+\varepsilon)e_{1}.
$$

\begin{lemma}\label{odd}
Suppose \emph{(H0)}, \emph{(H1)} and \emph{(H3)} hold and let $c_{\alpha}=\inf_{u\in H_{0}^{1}(\Omega)}I_{k}(u)$. Then for each $\varepsilon\in (0,
\gamma-\lambda_{1}a(\alpha))$, we have
\begin{equation}\label{7}
c_{\alpha}\leq -\frac{1}{2}\varepsilon\psi^{-1}(\lambda_{1}a(\alpha)+\varepsilon)^{2}\int_{\Omega}e_{1}^{2} dx, \ \forall \ \alpha\in (t_{k-1}, t_{k}).
\end{equation}
\end{lemma}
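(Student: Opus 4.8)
The plan is to bound $c_{\alpha}$ from above by evaluating the functional $I_{k}$ at the explicit test function $y_{\alpha}=s e_{1}$, where for brevity I write $s:=\psi^{-1}(\lambda_{1}a(\alpha)+\varepsilon)$. Since $c_{\alpha}=\inf_{u\in H_{0}^{1}(\Omega)}I_{k}(u)$, it suffices to show that $I_{k}(y_{\alpha})\leq-\tfrac{1}{2}\varepsilon s^{2}\int_{\Omega}e_{1}^{2}\,dx$, so the whole argument reduces to estimating the two terms of $I_{k}(s e_{1})$.

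First I would handle the quadratic (Dirichlet) term. Since $e_{1}$ solves $-\Delta e_{1}=\lambda_{1}e_{1}$ with homogeneous Dirichlet data, integration by parts gives $\|e_{1}\|^{2}=\int_{\Omega}|\nabla e_{1}|^{2}\,dx=\lambda_{1}\int_{\Omega}e_{1}^{2}\,dx$. Hence $\tfrac{1}{2}a(\alpha)\|s e_{1}\|^{2}=\tfrac{1}{2}a(\alpha)\lambda_{1}s^{2}\int_{\Omega}e_{1}^{2}\,dx$.

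The key step is the lower bound on the potential term $\int_{\Omega}F_{\ast}(s e_{1})\,dx$, which exploits the monotonicity coming from (H1). By definition $s$ satisfies $\psi(s)=f_{\ast}(s)/s=\lambda_{1}a(\alpha)+\varepsilon$. Because $e_{1}$ is normalized in $L^{\infty}$, we have $0\leq e_{1}\leq 1$, so at every point the argument $s e_{1}(x)$ lies in $[0,s]$. Writing $F_{\ast}(t)=\int_{0}^{t}\psi(\sigma)\sigma\,d\sigma$ and using that $\psi$ is decreasing, I would bound $\psi(\sigma)\geq\psi(s)$ for all $\sigma\in(0,s e_{1}(x)]\subset(0,s]$, which yields the pointwise estimate $F_{\ast}(s e_{1}(x))\geq\psi(s)\tfrac{(s e_{1}(x))^{2}}{2}=(\lambda_{1}a(\alpha)+\varepsilon)\tfrac{s^{2}e_{1}(x)^{2}}{2}$. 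Integrating over $\Omega$ gives $\int_{\Omega}F_{\ast}(s e_{1})\,dx\geq(\lambda_{1}a(\alpha)+\varepsilon)\tfrac{s^{2}}{2}\int_{\Omega}e_{1}^{2}\,dx$.

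Finally I would combine the two estimates:
\[
I_{k}(s e_{1})\leq\frac{1}{2}a(\alpha)\lambda_{1}s^{2}\int_{\Omega}e_{1}^{2}\,dx-(\lambda_{1}a(\alpha)+\varepsilon)\frac{s^{2}}{2}\int_{\Omega}e_{1}^{2}\,dx=-\frac{1}{2}\varepsilon s^{2}\int_{\Omega}e_{1}^{2}\,dx,
\]
the $a(\alpha)\lambda_{1}$ contributions canceling exactly. Since $c_{\alpha}\leq I_{k}(y_{\alpha})$, this is precisely \eqref{7}. The only delicate points are checking that (H3) together with the restriction $\varepsilon\in(0,\gamma-\lambda_{1}a(\alpha))$ guarantees $\lambda_{1}a(\alpha)+\varepsilon\in(0,\gamma)$, so that $y_{\alpha}$ is well defined and in fact $s<t_{\ast}$, and that the monotonicity of $\psi$ on $(0,t_{\ast})$ therefore applies throughout the interval of integration; neither presents a real obstacle.
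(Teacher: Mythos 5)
Your proposal is correct and follows essentially the same route as the paper: both evaluate $I_{k}$ at the test function $y_{\alpha}=\psi^{-1}(\lambda_{1}a(\alpha)+\varepsilon)e_{1}$, exploit the monotonicity of $\psi$ from (H1) to bound the potential term below by $\tfrac{1}{2}(\lambda_{1}a(\alpha)+\varepsilon)\,\psi^{-1}(\lambda_{1}a(\alpha)+\varepsilon)^{2}\int_{\Omega}e_{1}^{2}\,dx$, and finish with the eigenfunction identity $\|e_{1}\|^{2}=\lambda_{1}\int_{\Omega}e_{1}^{2}\,dx$. The only cosmetic difference is that you merge into a single integral estimate what the paper does in two steps, namely $F_{\ast}(t)\geq\tfrac{1}{2}f_{\ast}(t)t$ followed by $\psi(y_{\alpha})\geq\psi(\psi^{-1}(\lambda_{1}a(\alpha)+\varepsilon))$.
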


\begin{proof}
Observe that by (H1)
$$
F_{\ast}(t)\geq (1/2)f_{\ast}(t)t, \ \forall \ t\geq 0.
$$
Hence,
$$
\frac{I_{k}(y_{\alpha})}{\psi^{-1}(\lambda_{1}a(\alpha)+\varepsilon)^{2}}\leq\frac{1}{2}\left[a(\alpha)\|e_{1}\|^{2}-\int_{\Omega}\frac{f_{\ast}(y_{\alpha})}{\psi^{-1}(\lambda_{1}a(\alpha)+\varepsilon)^{2}}y_{\alpha}dx\right],
$$
or equivalently
$$
\frac{I_{k}(y_{\alpha})}{\psi^{-1}(\lambda_{1}a(\alpha)+\varepsilon)^{2}}\leq\frac{1}{2}\left[a(\alpha)\|e_{1}\|^{2}-\int_{\Omega}\frac{f_{\ast}(y_{\alpha})}{y_{\alpha}}e_{1}^{2}dx\right].
$$
Using the definition of $e_{1}$ and (H1), we get
$$
\frac{I_{k}(y_{\alpha})}{\psi^{-1}(\lambda_{1}a(\alpha)+\varepsilon)^{2}}\leq\frac{1}{2}\left[a(\alpha)\|e_{1}\|^{2}-\int_{\Omega}\frac{f_{\ast}(\psi^{-1}(\lambda_{1}a(\alpha)+\varepsilon))}{\psi^{-1}(\lambda_{1}a(\alpha)+\varepsilon)}e_{1}^{2}dx\right].
$$
Now, using the definition of $\psi^{-1}$, we conclude
$$
\frac{I_{k}(y_{\alpha})}{\psi^{-1}(\lambda_{1}a(\alpha)+\varepsilon)^{2}}=\frac{1}{2}\left[a(\alpha)\|e_{1}\|^{2}-(\lambda_{1}a(\alpha)+\varepsilon)\int_{\Omega}e_{1}^{2}dx\right]=-\frac{1}{2}\varepsilon\int_{\Omega}e_{1}^{2}
dx.
$$
Therefore,
$$
c_{\alpha}\leq I_{k}(y_{\alpha})\leq -\frac{1}{2}\varepsilon\psi^{-1}(\lambda_{1}a(\alpha)+\varepsilon)^{2}\int_{\Omega}e_{1}^{2} dx.
$$
\end{proof}

\begin{proposition}\label{pp2}
Suppose \emph{(H0)}, \emph{(H1)}, and \emph{(H3)} hold. Then for each $k\in \{1, 2, \ldots, K\}$, map $\mathcal{P}_{k}: (t_{k-1}, t_{k})\to \R$ defined by
$$
\mathcal{P}_{k}(\alpha)=\int_{\Omega}u_{\alpha}^{p}dx,
$$
where $p\geq 1$ and $u_{\alpha}$ was obtained in Proposition \ref{pro1}, is continuous.
\end{proposition}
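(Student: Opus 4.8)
The plan is to prove continuity sequentially. Fix $\alpha\in(t_{k-1},t_{k})$ and let $\alpha_{n}\to\alpha$ be an arbitrary sequence in $(t_{k-1},t_{k})$; write $u_{n}:=u_{\alpha_{n}}$ for the solutions given by Proposition \ref{pro1} and denote by $I_{k}^{\alpha_{n}}(v)=\frac{1}{2}a(\alpha_{n})\|v\|^{2}-\int_{\Omega}F_{\ast}(v)\,dx$ the associated energy at the parameter $\alpha_{n}$. First I would record uniform bounds. Proposition \ref{pro1} already gives $0<u_{n}\leq t_{\ast}$, so $(u_{n})$ is bounded in $L^{\infty}(\Omega)$. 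Testing the equation of \eqref{AP} with $u_{n}$ yields $a(\alpha_{n})\|u_{n}\|^{2}=\int_{\Omega}f_{\ast}(u_{n})u_{n}\,dx$, whose right-hand side is bounded independently of $n$ since $f_{\ast}$ is bounded and $0\leq u_{n}\leq t_{\ast}$. Because $\alpha$ lies in the open interval $(t_{k-1},t_{k})$, where $a>0$ by (H0), continuity of $a$ gives $a(\alpha_{n})\to a(\alpha)>0$, so $a(\alpha_{n})$ is bounded away from $0$ for large $n$ and $(u_{n})$ is bounded in $H_{0}^{1}(\Omega)$.

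Passing to a subsequence (not relabeled), I would then assume $u_{n}\rightharpoonup u$ in $H_{0}^{1}(\Omega)$; by the compact embedding $H_{0}^{1}(\Omega)\hookrightarrow L^{2}(\Omega)$ we get $u_{n}\to u$ in $L^{2}(\Omega)$ and, along a further subsequence, a.e. in $\Omega$. Together with $0\leq u_{n}\leq t_{\ast}$, dominated convergence upgrades this to $u_{n}\to u$ in every $L^{q}(\Omega)$, $q<\infty$, and gives $0\leq u\leq t_{\ast}$. Passing to the limit in the weak formulation $a(\alpha_{n})\int_{\Omega}\nabla u_{n}\cdot\nabla\phi\,dx=\int_{\Omega}f_{\ast}(u_{n})\phi\,dx$ is now routine: $a(\alpha_{n})\to a(\alpha)$, while continuity and boundedness of $f_{\ast}$ combined with a.e. convergence give $f_{\ast}(u_{n})\to f_{\ast}(u)$ in $L^{2}(\Omega)$. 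Hence $u$ is a weak solution of \eqref{AP} for the parameter $\alpha$.

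I expect the main obstacle to be ruling out the degenerate possibility $u\equiv0$, since only then can the limit be identified with $u_{\alpha}$; this is exactly where Lemma \ref{odd} is used. Fix $\varepsilon\in(0,\gamma-\lambda_{1}a(\alpha))$; since $a(\alpha_{n})\to a(\alpha)$ this $\varepsilon$ is admissible in Lemma \ref{odd} for all large $n$, giving $c_{\alpha_{n}}=I_{k}^{\alpha_{n}}(u_{n})\leq-\frac{1}{2}\varepsilon\,\psi^{-1}(\lambda_{1}a(\alpha_{n})+\varepsilon)^{2}\int_{\Omega}e_{1}^{2}\,dx$, and the right-hand side converges to a strictly negative limit because $\psi^{-1}$ is continuous. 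Using weak lower semicontinuity of the norm, $a(\alpha_{n})\to a(\alpha)>0$, and the strong convergence $\int_{\Omega}F_{\ast}(u_{n})\,dx\to\int_{\Omega}F_{\ast}(u)\,dx$, I would pass to the limit in the energies to obtain $I_{k}^{\alpha}(u)\leq\liminf_{n}I_{k}^{\alpha_{n}}(u_{n})\leq-\frac{1}{2}\varepsilon\,\psi^{-1}(\lambda_{1}a(\alpha)+\varepsilon)^{2}\int_{\Omega}e_{1}^{2}\,dx<0=I_{k}^{\alpha}(0)$. Thus $u\not\equiv0$, so $u$ is a nontrivial weak solution of \eqref{AP} and, by the uniqueness in Proposition \ref{pro1}, $u=u_{\alpha}$.

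Finally, since $0\leq u_{n}\leq t_{\ast}$ and $u_{n}\to u_{\alpha}$ a.e., dominated convergence yields $\mathcal{P}_{k}(\alpha_{n})=\int_{\Omega}u_{n}^{p}\,dx\to\int_{\Omega}u_{\alpha}^{p}\,dx=\mathcal{P}_{k}(\alpha)$ along the subsequence. As the limit $u_{\alpha}$ is independent of the subsequence chosen (by uniqueness in Proposition \ref{pro1}), a standard contradiction argument promotes this to convergence of the full sequence $\mathcal{P}_{k}(\alpha_{n})\to\mathcal{P}_{k}(\alpha)$, which proves the continuity of $\mathcal{P}_{k}$.
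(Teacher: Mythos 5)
Your proof is correct, and while it follows the same skeleton as the paper's (sequential continuity; a uniform $H_{0}^{1}$ bound using $a(\alpha_{n})\to a(\alpha_{\ast})>0$; weak limit solving \eqref{AP}; Lemma \ref{odd} to exclude $u\equiv 0$; uniqueness from Proposition \ref{pro1} to identify the limit with $u_{\alpha}$), it diverges from the paper in two technically meaningful ways, both in the direction of economy. First, to pass to the limit in the energy the paper first upgrades the weak convergence to strong $H_{0}^{1}$ convergence, by testing the equations against $u_{\ast}$ and against $u_{n}$ and comparing; you instead observe that only an \emph{upper} bound on $I_{k}^{\alpha}(u)$ is needed to rule out triviality, so weak lower semicontinuity of the norm (together with $a(\alpha_{n})\to a(\alpha)>0$ and the Lipschitz continuity of $F_{\ast}$) suffices --- this quietly removes a whole step. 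Second, and more substantially, for the conclusion $\int_{\Omega}u_{n}^{p}dx\to\int_{\Omega}u_{\alpha}^{p}dx$ the paper invokes elliptic regularity: uniform $C^{1,\beta}(\overline{\Omega})$ bounds via Theorem 0.5 of \cite{AP}, compactness of $C^{1,\beta}(\overline{\Omega})\hookrightarrow C^{1}(\overline{\Omega})$, and then the elementary estimate $||u_{n}|_{p}-|u_{\ast}|_{p}|\le|\Omega|^{1/p}|u_{n}-u_{\ast}|_{\infty}$; you get the same conclusion for every $p\ge 1$ directly from $0\le u_{n}\le t_{\ast}$, a.e.\ convergence, and dominated convergence, with no regularity theory at all. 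What the paper's heavier route buys is genuinely stronger information ($C^{1}$ convergence of the solutions), but none of that is needed for the proposition as stated, so your argument is a legitimate simplification; you also make explicit the subsequence-to-full-sequence step (via uniqueness of the limit) that the paper leaves implicit.
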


\begin{proof}
Let $\{\alpha_{n}\}\subset (t_{k-1}, t_{k})$ be such $\alpha_{n}\to \alpha_{\ast}$, for some $\alpha_{\ast}\in (t_{k-1}, t_{k})$. Denote by $u_{n}$ the positive solution of $\eqref{AP}$ with $\alpha=\alpha_{n}$. Since,
\begin{equation}\label{1}
\frac{1}{2}a(\alpha_{n})\|u_{n}\|^{2}-\int_{\Omega}F_{\ast}(u_{n})dx=I_{k}(u_{n})<0,
\end{equation}
we get
$$
\|u_{n}\|\leq 2\frac{F_{\ast}(t_{\ast})|\Omega|}{a(\alpha_{n})}, \ \forall \ n\in\N.
$$
Therefore, $\{u_{n}\}$ is bounded in $H_{0}^{1}(\Omega)$ and, up to a subsequence, there exists $u_{\ast}\in H_{0}^{1}(\Omega)$ such that
\begin{equation}\label{19}
u_{n}\rightharpoonup u_{\ast} \ \mbox{in} \ H_{0}^{1}(\Omega).
\end{equation}
Thus, passing to the limit as $n\to\infty$ in
$$
a(\alpha_{n})\int_{\Omega}\nabla u_{n}\nabla v dx=\int_{\Omega} f_{\ast}(u_{n})v dx, \ \forall \ v\in H_{0}^{1}(\Omega),
$$
we get
$$
a(\alpha_{\ast})\int_{\Omega}\nabla u_{\ast}\nabla v dx=\int_{\Omega} f_{\ast}(u_{\ast})v dx, \ \forall \ v\in H_{0}^{1}(\Omega).
$$
So, $u_{\ast}$ is a nonnegative weak solution of \eqref{AP} with $\alpha=\alpha_{\ast}$. We are going to show that $u_{\ast}\neq 0$. In fact, passing to the limit as
$n\to\infty$ in
$$
a(\alpha_{n})\int_{\Omega}\nabla u_{n}\nabla u_{\ast} dx=\int_{\Omega} f_{\ast}(u_{n})u_{\ast} dx
$$
and
$$
a(\alpha_{n})\|u_{n}\|^{2}=\int_{\Omega} f_{\ast}(u_{n})u_{n} dx,
$$
we conclude that
\begin{equation}\label{20}
\|u_{n}\|\to \|u_{\ast}\|.
\end{equation}

By \eqref{19} and \eqref{20},
\begin{equation}\label{21}
u_{n}\to u_{\ast} \ \mbox{in $H_{0}^{1}(\Omega)$}.
\end{equation}

By Lemma \ref{odd}, there exists $\varepsilon>0$, small enough, such that
$$
I_{k}(u_{n})\leq -\frac{1}{2}\varepsilon\psi^{-1}(\lambda_{1}a(\alpha_{n})+\varepsilon)^{2}\int_{\Omega}e_{1}^{2} dx, \ \forall \ n\in\N.
$$
By \eqref{21}, passing to the limit as $n\to\infty$ in the previous inequality, we obtain
$$
I_{k}(u_{\ast})\leq -\frac{1}{2}\varepsilon\psi^{-1}(\lambda_{1}a(\alpha_{\ast})+\varepsilon)^{2}\int_{\Omega}e_{1}^{2} dx<0.
$$
Therefore $u_{\ast}\neq 0$. Arguing as in the proof of Proposition \ref{pro1} we can show that $u_{\ast}$ is a positive classical solution of \eqref{AP} with
$\alpha=\alpha_{\ast}$. Since such a solution is unique, we conclude that $u_{\ast}=u_{\alpha_{\ast}}$. Consequently,
\begin{equation}\label{21b}
-\Delta(u_{n}-u_{\ast})=\frac{a(\alpha_{n})-a(\alpha_{\ast})}{a(\alpha_{n})}\Delta u_{\ast}+\frac{f_{\ast}(u_{n})-f_{\ast}(u_{\ast})}{a(\alpha_{n})}=:g_{n}(x), \ \forall \ n\in\N.
\end{equation}
Since $f_{\ast}$ is bounded and $a(\alpha_{n})$ is away from zero, there exists a positive constant $C$, such that
\begin{equation}\label{22}
|g_{n}|_{\infty}\leq C, \ \forall \ n\in\N.
\end{equation}
It follows follows \eqref{21b}, \eqref{22} and Theorem 0.5 in \cite{AP} that there exists $\beta\in (0, 1)$ such that
$$
\|u_{n}-u_{\ast}\|_{C^{1,\beta}(\overline{\Omega})}\leq C,  \ \forall \ n\in\N,
$$
for some $C>0$. By the compactness of embedding from $C^{1,\beta}(\overline{\Omega})$ into $C^{1}(\overline{\Omega})$, up to a
subsequence, we have
\begin{equation}\label{23}
u_{n}\to u_{\ast} \ \mbox{in $C^{1}(\overline{\Omega})$}.
\end{equation}
Convergence in \eqref{23} and inequality
$$
||u_{n}|_{p}-|u_{\ast}|_{p}|\leq |u_{n}-u_{\ast}|_{p}\leq |\Omega|^{1/p}|u_{n}-u_{\ast}|_{\infty}
$$
lead us to
$$
\mathcal{P}_{k}(\alpha_{n})\to\mathcal{P}_{k}(\alpha_{\ast}).
$$
This proves the continuity of $\mathcal{P}_{k}$.

\end{proof}

\medskip

\subsection{Step 3: Existence of fixed points}

\medskip

Next lemma will be helpful in obtaining fixed points of $\mathcal{P}_{k}$.

\begin{lemma}\label{odt}
Suppose \emph{(H0)}, \emph{(H1)} and \emph{(H3)} hold. Then
\begin{equation}\label{2}
u_{\alpha}\geq z_{\alpha}:=\psi^{-1}(\lambda_{1}a(\alpha))e_{1}, \ \forall \ \alpha\in (t_{k-1}, t_{k}).
\end{equation}
\end{lemma}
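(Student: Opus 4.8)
The plan is to establish \eqref{2} by a comparison argument: I will show that $z_{\alpha}$ is a (weak) subsolution of \eqref{AP}, that the constant $t_{\ast}$ is a supersolution lying above it, and then conclude $u_{\alpha}\geq z_{\alpha}$ by invoking the uniqueness from Proposition \ref{pro1}. The whole point of the definition $z_{\alpha}=\psi^{-1}(\lambda_{1}a(\alpha))e_{1}$ is that the amplitude $s:=\psi^{-1}(\lambda_{1}a(\alpha))$ is tuned precisely to the eigenvalue relation for $e_{1}$. First I would check that everything is well defined: by (H3) we have $\lambda_{1}a(\alpha)\le\lambda_{1}\max_{[0,t_{K}]}a<\gamma$, so $\lambda_{1}a(\alpha)$ lies in the domain $(0,\gamma)$ of $\psi^{-1}$ and $s\in(0,t_{\ast})$; in particular $z_{\alpha}$ makes sense and $0<z_{\alpha}\le s<t_{\ast}$, since $e_{1}$ is normalized so that $0<e_{1}\le 1$.

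The key computation is the verification that $z_{\alpha}$ is a subsolution. By definition of $\psi^{-1}$ one has $\psi(s)=f_{\ast}(s)/s=\lambda_{1}a(\alpha)$, that is $f_{\ast}(s)=\lambda_{1}a(\alpha)s$. Using $-\Delta e_{1}=\lambda_{1}e_{1}$ I then compute
$$
-a(\alpha)\Delta z_{\alpha}=a(\alpha)s\lambda_{1}e_{1}=f_{\ast}(s)e_{1}.
$$
Thus showing that $z_{\alpha}$ is a subsolution amounts to the pointwise inequality $f_{\ast}(s)e_{1}\le f_{\ast}(se_{1})=f_{\ast}(z_{\alpha})$. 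This is exactly where (H1) enters: since $0<se_{1}\le s<t_{\ast}$ and $t\mapsto f_{\ast}(t)/t$ is decreasing on $(0,t_{\ast})$, we get $f_{\ast}(se_{1})/(se_{1})\ge f_{\ast}(s)/s$, and multiplying by $se_{1}>0$ yields $f_{\ast}(se_{1})\ge e_{1}f_{\ast}(s)$, as required.

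It remains to turn the subsolution inequality into the ordering $z_{\alpha}\le u_{\alpha}$, and here lies the main obstacle: $f_{\ast}$ need not be monotone, so a naive test of the difference of the two equations against $(z_{\alpha}-u_{\alpha})^{+}$ does not obviously close. I would resolve this in one of two equivalent ways. Either I note that $t_{\ast}$ is a supersolution of \eqref{AP} (indeed $-a(\alpha)\Delta t_{\ast}=0=f(t_{\ast})=f_{\ast}(t_{\ast})$) with $z_{\alpha}\le t_{\ast}$, apply the sub--supersolution method to produce a solution in the order interval $[z_{\alpha},t_{\ast}]\subset(0,t_{\ast}]$, and then identify it with $u_{\alpha}$ via the uniqueness in Proposition \ref{pro1}, which forces $u_{\alpha}\ge z_{\alpha}$. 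Alternatively, and more directly, I would exploit the sublinear structure through the Brezis--Oswald test functions $(z_{\alpha}^{2}-u_{\alpha}^{2})^{+}/z_{\alpha}$ and $(z_{\alpha}^{2}-u_{\alpha}^{2})^{+}/u_{\alpha}$: testing the subsolution inequality and the solution identity respectively and subtracting, the gradient terms combine into a nonnegative quantity, while the reaction terms reduce to $\int_{\{z_{\alpha}>u_{\alpha}\}}\big(f_{\ast}(z_{\alpha})/z_{\alpha}-f_{\ast}(u_{\alpha})/u_{\alpha}\big)(z_{\alpha}^{2}-u_{\alpha}^{2})\,dx\le 0$ by monotonicity of $\psi$, forcing $\{z_{\alpha}>u_{\alpha}\}$ to be a null set. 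Either route yields \eqref{2}.
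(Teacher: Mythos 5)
Your proof is correct and follows essentially the same route as the paper: the identical computation, based on (H1), the normalization $0<e_{1}\le 1$, and the definition of $\psi^{-1}$, showing that $z_{\alpha}$ is a subsolution of \eqref{AP}, followed by a comparison argument to conclude $u_{\alpha}\ge z_{\alpha}$. The only difference is that where you prove the comparison step by hand (via sub/supersolutions plus the uniqueness of Proposition \ref{pro1}, or via the Brezis--Oswald test functions), the paper simply cites Lemma 3.3 of \cite{ABC}, which is exactly the sublinear comparison principle your second argument reproves.
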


\begin{proof}

In fact, it follows from (H1) and the definition of $\psi^{-1}$ that
$$
\lambda_{1}a(\alpha)= \frac{f_{\ast}(\psi^{-1}(\lambda_{1}a(\alpha)))}{\psi^{-1}(\lambda_{1}a(\alpha))}\leq \frac{f_{\ast}(z_{\alpha})}{z_{\alpha}}.
$$
Thus
$$
-a(\alpha)\Delta(z_{\alpha})=\lambda_{1}a(\alpha)z_{\alpha}\leq f_{\ast}(z_{\alpha}) \ \mbox{in $\Omega$}.
$$

Therefore $z_{\alpha}$ is a subsolution of \eqref{AP}. Inequality \eqref{2} follows now from (H1) and Lemma 3.3 in \cite{ABC}.
\end{proof}

\medskip

\begin{proposition}\label{pp3}
Suppose \emph{(H0)-(H4)} hold. Then map $\mathcal{P}_{k}$ has at least two fixed points $t_{k-1}<\alpha_{1, k}<\alpha_{2, k}<t_{k}$.
\end{proposition}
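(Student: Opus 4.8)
The plan is to reduce the existence of fixed points of $\mathcal{P}_{k}$ to the intermediate value theorem for the continuous function $g(\alpha):=\mathcal{P}_{k}(\alpha)-\alpha$ on $(t_{k-1},t_{k})$ (continuity being Proposition \ref{pp2}). I would show that $g>0$ near both endpoints of the interval and $g<0$ at a suitable interior point; this forces at least two sign changes, hence two fixed points.

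For the endpoints, I would use the subsolution bound of Lemma \ref{odt}. By (H0) we have $a(\alpha)\to 0$ as $\alpha\to t_{k-1}^{+}$ or $\alpha\to t_{k}^{-}$; since $\psi$ is decreasing on $(0,t_{\ast})$ with $\psi(t_{\ast}^{-})=f(t_{\ast})/t_{\ast}=0$, its inverse satisfies $\psi^{-1}(s)\to t_{\ast}$ as $s\to 0^{+}$. Hence, because $p\ge 1$ and $u_{\alpha}\ge z_{\alpha}\ge 0$,
$$
\mathcal{P}_{k}(\alpha)=\int_{\Omega}u_{\alpha}^{p}\,dx\geq\int_{\Omega}z_{\alpha}^{p}\,dx=\psi^{-1}(\lambda_{1}a(\alpha))^{p}\int_{\Omega}e_{1}^{p}\,dx\longrightarrow t_{\ast}^{p}\int_{\Omega}e_{1}^{p}\,dx
$$
at each endpoint. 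By (H2) this limiting lower bound exceeds $t_{K}\geq t_{k}>\alpha$, so $g(\alpha)>0$ for $\alpha$ sufficiently close to $t_{k-1}$ and to $t_{k}$.

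For the interior point, I would derive an upper bound on $\mathcal{P}_{k}$. Testing \eqref{AP} with $u_{\alpha}$, using $0<u_{\alpha}\leq t_{\ast}$ together with Cauchy--Schwarz and Poincar\'e's inequality $|u_{\alpha}|_{2}\leq\lambda_{1}^{-1/2}\|u_{\alpha}\|$, gives
$$
a(\alpha)\|u_{\alpha}\|^{2}=\int_{\Omega}f_{\ast}(u_{\alpha})u_{\alpha}\,dx\leq\Big(\max_{t\in[0,t_{\ast}]}f(t)\Big)|\Omega|^{1/2}\lambda_{1}^{-1/2}\|u_{\alpha}\|,
$$
so $\|u_{\alpha}\|\leq(\max_{[0,t_{\ast}]}f)\,|\Omega|^{1/2}/(\lambda_{1}^{1/2}a(\alpha))$. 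Combining this with $u_{\alpha}^{p}\leq t_{\ast}^{p-1}u_{\alpha}$ and the Sobolev bound $|u_{\alpha}|_{1}\leq C_{1}\|u_{\alpha}\|$ yields
$$
\mathcal{P}_{k}(\alpha)\leq t_{\ast}^{p-1}|u_{\alpha}|_{1}\leq\frac{C_{1}|\Omega|^{1/2}t_{\ast}^{p-1}\max_{[0,t_{\ast}]}f}{\lambda_{1}^{1/2}\,a(\alpha)}.
$$
Let $\alpha^{\ast}\in(t_{k-1},t_{k})$ be a maximizer of $a(t)t$ over $[t_{k-1},t_{k}]$; it is interior since $a(t)t$ vanishes at the endpoints and is positive inside. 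Evaluating the estimate at $\alpha=\alpha^{\ast}$ and invoking (H4), namely $\max_{[0,t_{\ast}]}f\cdot t_{\ast}^{p-1}<(\lambda_{1}^{1/2}/(C_{1}|\Omega|^{1/2}))\,a(\alpha^{\ast})\alpha^{\ast}$, gives $\mathcal{P}_{k}(\alpha^{\ast})<\alpha^{\ast}$, i.e. $g(\alpha^{\ast})<0$.

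Finally, applying the intermediate value theorem to $g$ on $(t_{k-1},\alpha^{\ast})$ and on $(\alpha^{\ast},t_{k})$ produces fixed points $\alpha_{1,k}\in(t_{k-1},\alpha^{\ast})$ and $\alpha_{2,k}\in(\alpha^{\ast},t_{k})$, whence $t_{k-1}<\alpha_{1,k}<\alpha_{2,k}<t_{k}$. I expect the main obstacle to be the interior estimate: the chain of inequalities must be organized so that the resulting constant matches exactly the right-hand side of (H4), and one must test at the maximizer of $a(t)t$ (rather than of $a$ alone) in order to exploit (H4) in the form above. The endpoint argument is comparatively soft, relying only on Lemma \ref{odt} and (H2).
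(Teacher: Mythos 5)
Your proof is correct, and its skeleton is the paper's own: an endpoint lower bound from Lemma \ref{odt} combined with (H2), an interior upper bound on $\mathcal{P}_{k}$ calibrated to (H4), and the intermediate value theorem applied to $\mathcal{P}_{k}(\alpha)-\alpha$ on both sides of an interior point. Where you genuinely diverge is the interior estimate (the paper's Claim 2). The paper introduces an auxiliary Dirichlet problem $-\Delta w_{\alpha}=u_{\alpha}^{p-1}$, rewrites $\mathcal{P}_{k}(\alpha)=\int_{\Omega}\nabla w_{\alpha}\nabla u_{\alpha}\,dx=\frac{1}{a(\alpha)}\int_{\Omega}f_{\ast}(u_{\alpha})w_{\alpha}\,dx$, and then bounds $\|w_{\alpha}\|$; you instead test \eqref{AP} with $u_{\alpha}$ itself and use the pointwise inequality $u_{\alpha}^{p}\le t_{\ast}^{p-1}u_{\alpha}$. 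Your route is more elementary (no auxiliary problem, no duality step) and it lands on exactly the same constant $C_{1}|\Omega|^{1/2}t_{\ast}^{p-1}\max_{[0,t_{\ast}]}f/(\lambda_{1}^{1/2}a(\alpha))$, so (H4) applies verbatim; the paper's device buys nothing extra here. You also make explicit two points the paper leaves implicit: that one should evaluate at a maximizer $\alpha^{\ast}$ of $a(t)t$ over $[t_{k-1},t_{k}]$, and that this maximizer is interior because $a(t)t$ vanishes at both endpoints.

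One caveat, which you share with the paper: for $k=1$ the left endpoint is $t_{0}=0$, and (H0) does not force $a(0)=0$, so your assertion that $a(\alpha)\to 0$ as $\alpha\to t_{k-1}^{+}$ (hence $\psi^{-1}(\lambda_{1}a(\alpha))\to t_{\ast}$) can fail there. This is harmless: by Lemma \ref{odt} and (H3), the lower bound $(\psi^{-1}(\lambda_{1}a(\alpha)))^{p}\int_{\Omega}e_{1}^{p}\,dx$ stays bounded away from $0$ as $\alpha\to 0^{+}$, while $\alpha\to 0$, so $\mathcal{P}_{1}(\alpha)>\alpha$ near the left endpoint in any case. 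A precise formulation at either endpoint would use $\liminf$ rather than $\lim$, since the one-sided limits of $\mathcal{P}_{k}$ are not known to exist; again, the paper is equally loose on this point.
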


\begin{proof}

We start with two claims describing the geometry of $\mathcal{P}_{k}$ (see Fig. 2).

\begin{figure}[h]
\begin{center}
\begin{picture}(300,280)(0,0)
 \put(0,30){\vector(1,0){280}} 
 \put(30,0){\vector(0,1){280}}  
  \multiput(10,10)(2,2){130}{\circle*{0.01}}

 \multiput(80,30)(0,4){62}{\line(0,2){1}}
 \multiput(200,30)(0,4){62}{\line(0,2){1}}

 \multiput(104,30)(0,2){38}{\line(0,1){1}}
    \put(104,104){\circle*{3}}
 \multiput(174,30)(0,2){72}{\line(0,1){1}}
    \put(174,174){\circle*{3}}

 \bezier{3000}(84,270)(100,-100)(200,260)
   \color{white}\put(200,260){\circle*{3}}
   \color{black}\put(200,260){\circle{3}}

  \put(80,26){\makebox(0,0)[ct]{{\large $t_{k-1}$}}}
  \put(104,24){\makebox(0,0)[ct]{{\large $\alpha_{1,k}$}}}
  \put(174,24){\makebox(0,0)[ct]{{\large $\alpha_{2,k}$}}}
  \put(202,26){\makebox(0,0)[ct]{{\large $t_k$}}}
  \put(268,34){\makebox(0,0)[lb]{{\large $\alpha$}}}

  \put(25,268){\makebox(0,0)[rc]{{\large $y$}}}

  \put(140,268){\makebox(0,0)[rc]{{\large $y=P_k(\alpha)$}}}
  \put(295,260){\makebox(0,0)[rc]{\large $y=\alpha$}}

\end{picture}

\end{center}
\caption{Geometry of $\mathcal{P}_{k}$ in $(t_{k-1},t_k)$.}
\end{figure}
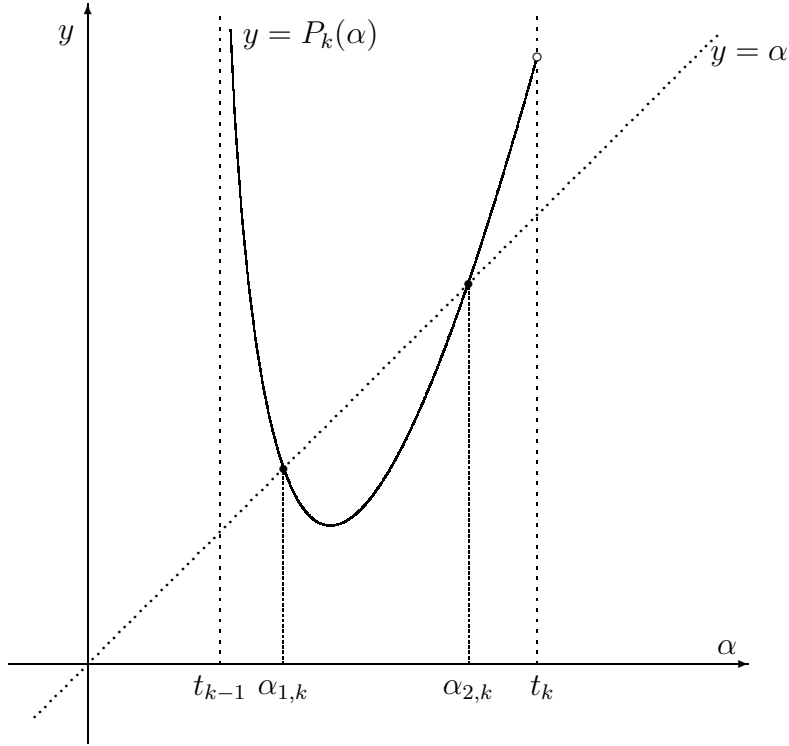

\noindent {\bf Claim 1:} $\lim\limits_{\alpha\to t_{k-1}^{+}}\mathcal{P}_{k}(\alpha)>t_{k-1}$ and $\lim\limits_{\alpha\to
t_{k}^{-}}\mathcal{P}_{k}(\alpha)>t_{k}$.

\medskip

From Lemma \ref{odt}, we have
$$
\mathcal{P}_{k}(\alpha)\geq (\psi^{-1}(\lambda_{1}a(\alpha)))^{p}\int_{\Omega}e_{1}^{p}dx, \ \forall \ \alpha\in (t_{k-1}, t_{k}).
$$
Hence, by (H2)
$$
\lim_{\alpha\to t_{k-1}^{+} \ \mbox{or} \ t_{k}^{-}}\mathcal{P}_{k}(\alpha)\geq t_{\ast}^{p}\int_{\Omega}e_{1}^{p}dx>t_{K}>t_{k}>t_{k-1}.
$$

\medskip

\noindent {\bf Claim 2:} There exists $\alpha\in (t_{k-1}, t_{k})$ such that $\mathcal{P}_{k}(\alpha)<\alpha$.

\medskip

For each $\alpha\in (t_{k-1}, t_{k})$, let $w_{\alpha}$ be the unique solution (which is positive) of the problem
\begin{equation*}
\left \{ \begin{array}{ll}
-\Delta u = u_{\alpha}^{p-1} & \mbox{in $\Omega$,}\\
u=0 & \mbox{on $\partial\Omega$,}
\end{array}\right.
\end{equation*}
where $u_{\alpha}$ is the unique positive solution of \eqref{AP}. Hence, multiplying by $u_{\alpha}$ and integrating by parts, we have
$$
\int_{\Omega}\nabla w_{\alpha}\nabla u_{\alpha} dx=\int_{\Omega}u_{\alpha}^{p}dx=\mathcal{P}_{k}(\alpha).
$$
On the other hand, by using the definition of $u_{\alpha}$, we get
\begin{equation}\label{12}
\mathcal{P}_{k}(\alpha)=\frac{1}{a(\alpha)}\int_{\Omega}f_{\ast}(u_{\alpha})w_{\alpha} dx.
\end{equation}

By definition of $w_{\alpha}$, the fact that $0<u_{\alpha}\leq t_{\ast}$ and H\"older's inequality, we obtain
\begin{equation}\label{15}
\|w_{\alpha}\|\leq  (1/\lambda_{1}^{1/2})\left(\int_{\Omega}u_{\alpha}^{2(p-1)}dx\right)^{1/2}\leq (1/\lambda_{1}^{1/2})t_{\ast}^{p-1}|\Omega|^{1/2}.
\end{equation}
Thus,
\begin{equation}\label{16}
\mathcal{P}_{k}(\alpha)\leq\frac{1}{a(\alpha)}\left(\max_{[0, t_{\ast}]}f(t)\right)C_{1}\|w_{\alpha}\|,
\end{equation}
where $C_{1}>0$ is the best constant of the Sobolev embedding from $H_{0}^{1}(\Omega)$ into $L^{1}(\Omega)$. Applying \eqref{15} in \eqref{16}, we obtain
$$
\mathcal{P}_{k}(\alpha)\leq \frac{1}{a(\alpha)}\left(\max_{[0, t_{\ast}]}f(t)\right)(C_{1}/\lambda_{1}^{1/2})t_{\ast}^{p-1}|\Omega|^{1/2}, \ \forall \
\alpha\in (t_{k-1}, t_{k}).
$$
Using (H4) we get the conclusion of Claim 2.

\medskip

The proof follows from Proposition \ref{pp2}, Claim 1, Claim 2 and the intermediate value theorem for continuous real functions.
\end{proof}

\subsection{Step 4: Proof of Theorem \ref{main}}

\medskip

For each fixed $k\in \{1, \ldots, K\}$, it follows from Propositions \ref{pro1} and \ref{pp3} that \eqref{P} has two classical positive solutions $u_{k, 1}$
and $u_{k, 2}$ such that
$$
t_{k-1}<\int_{\Omega}u_{k, 1}^{p}dx<\int_{\Omega}u_{k, 2}^{p}dx<t_{k}.
$$
This finishes the proof.\hspace*{\fill} $\square$

%
%

\section{Example}\label{se:exam}

  We provide an example of functions $a$ and $f$ satisfying hypotheses (H0)-(H4).

  Let $0=t_0<t_1<\ldots<t_K$ and $t_{\ast}$ be such that $(H2)$ holds.
  Let $a\colon [0,t_K]\to\mathbb{R}$ be any function satisfying (H0) and (H3) with some $\gamma>0$.
  Denote
\[
  A:=\min_{k\in\{1,\ldots,K\}}\max_{t\in [t_{k-1},t_k]}a(t)t>0\quad\textrm{and}\quad M:=\lambda_{1}^{1/2}A/C_{1}|\Omega|^{1/2}t_{\ast}^{p-1}.
\]
  Choose any $\eta>\max\{\gamma/M,\ t_{\ast}/M,\ 1/t_{\ast}+1/\gamma\}$ and fix
  $c:=\eta^2\gamma-(\gamma/t_{\ast}+1)\eta$
  (note that $c>0$ from the choice of $\eta$).
  Let
\[
  f(t):=\gamma t\cdot\frac{1-t/t_{\ast}}{1+ct}.
\]
  We need to check that $f$ satisfies all the assumptions.
  First note that the map $t\mapsto f(t)/t$ is decreasing on $[0, t_{\ast}]$,
  $\lim_{t\to 0^{+}}f(t)/t=\gamma$, $f(0)=f(t_{\ast})=0$ and $f(t)>0$ for all $t\in (0,t_{\ast})$
  (so (H0) and (H1) hold).
  Finally note that
\[
  f(t)\le \gamma\cdot 1/\eta<M\quad\forall t\in [0,1/\eta]
\]
  and
\[
  f(t)=t\cdot\frac{f(t)}{t}\le t_{\ast}\cdot1/\eta<M\quad\forall t\in [1/\eta,t_{\ast}]
\]
  (as $f(t)/t$ is decreasing and $f(1/\eta)/(1/\eta)=1/\eta$).
  Thus $\max_{t\in[0, t_{\ast}]}f(t)<M$ and so (H4) holds.

\end{document}